\documentclass[12pt]{amsproc}

\usepackage{mathtools}
\usepackage{xcolor}
\usepackage[margin=1in]{geometry}
\usepackage[hypertexnames=false]{hyperref}
\usepackage{aliascnt}

\usepackage[capitalize,nameinlink,noabbrev]{cleveref}
\hypersetup{
	colorlinks=true,       
	linkcolor=purple,         
	citecolor=purple,        
	urlcolor=purple,           
}

\makeatletter
\patchcmd{\@maketitle}
  {\@settitle}
  {\vspace*{-2.5cm}\@settitle}
  {}{}
\makeatother

\theoremstyle{plain}
\newtheorem{theorem}{Theorem}

\newaliascnt{lemma}{theorem}
\newtheorem{lemma}[lemma]{Lemma}
\aliascntresetthe{lemma}

\newaliascnt{corollary}{theorem}
\newtheorem{corollary}[corollary]{Corollary}
\aliascntresetthe{corollary}

\newcommand{\bbQ}{\mathbb{Q}}
\newcommand{\bk}{\boldsymbol{k}}
\newcommand{\bl}{\boldsymbol{l}}

\begin{document}
\title{A note on Zlobin's note}

\author{Shin-ichiro Seki}
\address{Nagahama Institute of Bio-Science and Technology, 1266 Tamura-cho, Nagahama, Shiga, 526-0829, Japan}
\email{s\_seki@nagahama-i-bio.ac.jp}
\maketitle

Very little is known about the arithmetic nature, to use Zlobin's terminology, of multiple zeta values
\[
\zeta(k_1,\dots,k_r)=\sum_{0<n_1<\cdots<n_r}\frac{1}{n_1^{k_1}\cdots n_r^{k_r}},
\]
where $k_1, \dots, k_r$ are positive integers with $k_r\geq 2$.
The integer $k_1+\cdots+k_r$ is called the weight.
When the weight is even, say $2k$, there are several multiple zeta values that can be expressed as rational multiples of $\pi^{2k}$, and hence are transcendental by Lindemann's theorem.
In contrast, in odd weight, the only multiple zeta value currently known to be irrational is Ap\'ery's $\zeta(3)$, which also occurs as the double zeta value $\zeta(1,2)$.

By an odd zeta value, we mean a value of the Riemann zeta function $\zeta(s)$ at an odd integer $s\geq 3$.
It is known that there exist odd zeta values $\zeta(k)$ such that $1$, $\zeta(3)$, and $\zeta(k)$ are linearly independent over $\bbQ$.
For an odd integer $N\geq 5$, let $P(N)$ denote the following property:
\[
\text{there exists } k\in\{5,7,\dots,N\}
\text{ such that } 1,\ \zeta(3),\ \text{and } \zeta(k)
\text{ are linearly independent over } \mathbb{Q}.
\]
Ball and Rivoal \cite{BallRivoal} proved that $P(169)$ holds.
Fischler and Zudilin improved this result by proving that $P(139)$ holds, and Lai has announced that $P(75)$ holds.
See \cite[Theorem~3]{FischlerZudilin} and \cite[Claim~1.4]{Lai}.

On the other hand, Zlobin proved an analogous result in which odd zeta values are replaced by multiple zeta values of odd weight.
More precisely, Zlobin considered double zeta values of weight $5$ and triple zeta values of weight $7$ whose components are all either $2$ or $3$.
\begin{theorem}[{Zlobin~\cite[Corollary~3]{Zlobin}}]
There exists
\[
\bk\in\{(3,2), (2,3), (3,2,2), (2,3,2), (2,2,3)\}
\]
such that $1$, $\zeta(3)$, and $\zeta(\bk)$ are linearly independent over $\bbQ$.
\end{theorem}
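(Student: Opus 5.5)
The plan is to argue by contradiction, using the explicit evaluations of these multiple zeta values in terms of products of single zeta values together with Lindemann's theorem. Suppose that for every $\bk$ in the list, the numbers $1$, $\zeta(3)$, $\zeta(\bk)$ are linearly dependent over $\bbQ$. Since $\zeta(3)$ is irrational by Ap\'ery's theorem, $1$ and $\zeta(3)$ are linearly independent. Hence each of the five values lies in the $\bbQ$-vector space $V=\bbQ+\bbQ\zeta(3)$.

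\emph{Weight $5$.} By the double shuffle relations (Euler's evaluations), $\zeta(3,2)$ and $\zeta(2,3)$ are $\bbQ$-linear combinations of $\zeta(5)$ and $\zeta(2)\zeta(3)$. In the convention of the paper, with $n_1<n_2$, these are $\zeta(2,3)=3\zeta(2)\zeta(3)-\tfrac{11}{2}\zeta(5)$ and $\zeta(3,2)=\tfrac{9}{2}\zeta(5)-2\zeta(2)\zeta(3)$. The $2\times 2$ coefficient matrix has determinant $\tfrac{27}{2}-11\neq 0$, so I will invert it. This gives $\zeta(2)\zeta(3)\in V$, say $\zeta(2)\zeta(3)=a+b\zeta(3)$ with $a,b\in\bbQ$. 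Then $\zeta(3)\bigl(\pi^2/6-b\bigr)=a$. If $a=0$, then $\pi^2=6b$ would be rational, which is impossible, so $a\neq 0$.

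\emph{Weight $7$.} In the same way I will use the known evaluations of $\zeta(3,2,2)$, $\zeta(2,3,2)$, $\zeta(2,2,3)$ as $\bbQ$-linear combinations of $\zeta(7)$, $\zeta(2)\zeta(5)$ and $\zeta(4)\zeta(3)$. These come from the double shuffle relations in weight $7$, or from the known formulas for $\zeta(\{2\}^a,3,\{2\}^b)$ due to Zagier. The key check is that the resulting $3\times 3$ rational matrix is invertible. Granting this, $\zeta(4)\zeta(3)\in V$, say $\zeta(3)\bigl(\pi^4/90-d\bigr)=c$ with $c,d\in\bbQ$.

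Now I eliminate $\zeta(3)$. Since $\zeta(3)\neq 0$ and $\pi^2/6-b\neq 0$, dividing the two relations gives
\[
a\Bigl(\frac{\pi^4}{90}-d\Bigr)=c\Bigl(\frac{\pi^2}{6}-b\Bigr).
\]
This is a polynomial relation in $\pi$ with rational coefficients whose leading coefficient $a/90$ is nonzero. So $\pi$ would be algebraic, contradicting Lindemann's theorem. Hence at least one $\bk$ in the list gives the required linear independence. Note that only the quantities $\zeta(2)\zeta(3)$ and $\zeta(4)\zeta(3)$ are actually needed; $\zeta(5)$ and $\zeta(7)$ simply drop out.

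The main obstacle I expect is the weight-$7$ step. One has to obtain correct explicit formulas for the three triple zeta values, keeping track of the ordering convention $n_1<\dots<n_r$, which reverses the usual indices. Then one must verify that the coefficient matrix is nonsingular, or at least that $\zeta(4)\zeta(3)$ can be isolated from the three values. I would derive these formulas from Zagier's $\zeta(\{2\}^a,3,\{2\}^b)$ formula and cross-check them numerically before computing the determinant.
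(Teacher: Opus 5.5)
Your argument is correct and is essentially Zlobin's proof as the paper summarizes it. Zagier's theorem supplies the invertible matrices $A_1$ and $A_2$, with $h_1=\zeta(2)$ and $h_2=\tfrac34\zeta(4)$; the step you granted does hold, since $\det(16A_2)=19328\neq 0$. These force $\zeta(2)\zeta(3)$ and $\zeta(4)\zeta(3)$ into $\bbQ+\bbQ\zeta(3)$, and eliminating $\zeta(3)$ contradicts the $\bbQ$-linear independence of $1,\pi^2,\pi^4$, exactly as in the paper's description. The paper's own proof of the stronger Theorem~\ref{thm:Zlobin2} instead uses the $\zeta(2)$-multiplication relations $\zeta(2)Z_n=B_nZ_{n+1}$ for $n=0,1$ together with only the transcendence of $\zeta(2)$, which lets it work modulo $\overline{\bbQ}+\overline{\bbQ}\zeta(3)$ and also extract a second transcendental value.
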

Zlobin's proof does not proceed by constructing suitable linear forms and applying Nesterenko's criterion.
Instead, it is based on a simple observation that the linear independence of $1$, $\pi^2$, and $\pi^4$ over $\bbQ$ implies the linear independence over $\bbQ$ of at least one of the following two triples: $\bigl(1,\ \zeta(3),\ \zeta(3)\zeta(2)\bigr)$ and $\bigl(1,\ \zeta(3),\ \zeta(3)\zeta(4)\bigr)$.

In this note, we present the following slight extension of Zlobin's result.
\begin{theorem}\label{thm:Zlobin2}
There exist two distinct elements
\[
\bk,\bl\in\{(3,2),(2,3),(3,2,2),(2,3,2),(2,2,3)\}
\]
such that both $\zeta(\bk)$ and $\zeta(\bl)$ are transcendental and, moreover, $1$, $\zeta(3)$, and $\zeta(\bk)$ are linearly independent over $\bbQ$.
\end{theorem}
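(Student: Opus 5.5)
The plan is to reduce everything to the two products $\zeta(3)\zeta(2)$ and $\zeta(3)\zeta(4)$, exactly as in Zlobin's observation, and then combine two facts: Lindemann's theorem, and Zlobin's theorem quoted above. The first step is to write explicit evaluations. For depth two I will use Euler's formulas, in the convention of this paper:
\[
\zeta(3,2)=\tfrac92\zeta(5)-2\zeta(2)\zeta(3),\qquad \zeta(2,3)=3\zeta(2)\zeta(3)-\tfrac{11}2\zeta(5).
\]
Eliminating $\zeta(5)$ gives
\[
11\zeta(3,2)+9\zeta(2,3)=5\,\zeta(2)\zeta(3).
\]
For depth three, each of $\zeta(3,2,2),\zeta(2,3,2),\zeta(2,2,3)$ is known to be a rational combination of $\zeta(7)$, $\zeta(2)\zeta(5)$ and $\zeta(4)\zeta(3)$. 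These relations come from the double shuffle relations, or can be taken from the literature Zlobin uses. That is three vectors in a $3$-dimensional space with two coordinates to kill, so I will compute rationals $a,b,c$ with
\[
a\zeta(3,2,2)+b\zeta(2,3,2)+c\zeta(2,2,3)=q\,\zeta(4)\zeta(3),\qquad q\in\bbQ^\times .
\]
The main computational obstacle is checking that $q\neq0$; I will verify it from the explicit table.

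Transcendence comes next. Put $A=\{(3,2),(2,3)\}$ and $B$ the set of three triples. Suppose every $\zeta(\bk)$ with $\bk\in A$ is algebraic and every $\zeta(\bk)$ with $\bk\in B$ is algebraic. Then $\zeta(2)\zeta(3)$ and $\zeta(4)\zeta(3)$ are both nonzero algebraic numbers, so their ratio $\zeta(4)/\zeta(2)=\pi^2/15$ is algebraic, contradicting Lindemann. Hence at least one of the two families contains a transcendental value. The same ratio argument gives more: if all of $A$ is algebraic, then $\zeta(3)\in\overline{\bbQ}\,\pi^{-2}$, so $\zeta(3)$ is transcendental and $\zeta(4)\zeta(3)\in\overline{\bbQ}\,\pi^{2}$ is transcendental. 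Therefore some element of $B$ is transcendental, and symmetrically with $A$ and $B$ swapped.

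For the linear independence part, Zlobin's observation says that one of the triples $(1,\zeta(3),\zeta(3)\zeta(2))$ and $(1,\zeta(3),\zeta(3)\zeta(4))$ is linearly independent over $\bbQ$. By the displayed rational identities, the corresponding family $F\in\{A,B\}$ contains $\bk$ with $1,\zeta(3),\zeta(\bk)$ linearly independent. Otherwise every $\zeta(\bk)$, $\bk\in F$, would lie in $\bbQ+\bbQ\zeta(3)$, hence so would the product. I then have to arrange that this $\bk$ is itself transcendental and to find a second transcendental $\bl\neq\bk$.

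The hard step is the case where the independent $\zeta(\bk)$ happens to be algebraic. In that case $\zeta(3)$ is transcendental: a linear dependence would otherwise be forced among algebraic numbers only if $\zeta(3)$ were algebraic, and the transcendence argument above then applies. I would try to show that some other element of $F$, or of the other family, is both transcendental and linearly independent of $1,\zeta(3)$. Suppose instead that every transcendental member of $F$ lies in $\bbQ+\bbQ\zeta(3)$ while the algebraic members lie in $\overline{\bbQ}$. Then the product $\zeta(3)\zeta(2)$ (or $\zeta(3)\zeta(4)$) lies in $\overline{\bbQ}+\bbQ\zeta(3)$, which gives $\zeta(3)\bigl(\pi^{2j}r-s\bigr)\in\overline{\bbQ}$ with $r\in\bbQ^\times$ and $s\in\bbQ$.

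I expect the obstacle to be turning this into a contradiction. My first attempt is to play the two families against each other: the resulting expressions for $\zeta(3)$ in terms of $\pi^2$ and in terms of $\pi^4$ should together force a nontrivial algebraic relation satisfied by $\pi$, again contradicting Lindemann. The remaining case bookkeeping then yields $\bk$ and a distinct transcendental $\bl$, taken from the other family if necessary.
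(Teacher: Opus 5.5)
\textbf{There is a genuine gap: the second transcendental value $\zeta(\bl)$.} Your plan never produces it, and the ingredients you actually use cannot produce it.

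Your claim that the ratio argument ``gives more'' adds nothing. The implication ``if every $\zeta(\bk)$ with $\bk\in A$ is algebraic, then some $\zeta(\bk)$ with $\bk\in B$ is transcendental'' is logically equivalent to ``$A$ and $B$ are not both entirely algebraic''. So is its mirror image. Both amount to one transcendental value in total.

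More decisively, consider the following data:
\begin{itemize}
\item set $\zeta(3)=90/\pi^4$;
\item give $\zeta(3,2,2),\zeta(2,3,2),\zeta(2,2,3)$ algebraic values satisfying your depth-three identity, whose right-hand side is now $q$;
\item give $\zeta(3,2)$ an algebraic value $\alpha$;
\item put $\zeta(2,3)=(75\pi^{-2}-11\alpha)/9$.
\end{itemize}
Then both of your identities hold, $\pi$ is transcendental, and $\zeta(3)\notin\bbQ$. Zlobin's dichotomy holds, and $1,\zeta(3),\zeta(2,3)$ are $\bbQ$-linearly independent. Yet exactly one of the five numbers is transcendental, and all five can even be chosen positive. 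So no case bookkeeping based only on $11\zeta(3,2)+9\zeta(2,3)=5\zeta(2)\zeta(3)$, the $\zeta(4)\zeta(3)$ identity and Lindemann can finish the proof.

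Separately, the inference ``$\zeta(\bk)$ algebraic, hence $\zeta(3)$ transcendental'' is a non sequitur. A priori, $1,\zeta(3),\zeta(\bk)$ could be $\bbQ$-independent algebraic numbers, so the case $\zeta(3)\in\overline{\bbQ}$ must be handled on its own.

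\textbf{What is missing} is a relation tying the two families together through multiplication by $\zeta(2)$. From Euler's formula and $\zeta(2)^2=\frac52\zeta(4)$ one gets $\zeta(2)\zeta(3,2)=\frac92\zeta(2)\zeta(5)-5\zeta(4)\zeta(3)$. By the invertibility of Zagier's matrix $A_2$, this is a rational combination of the triple values. That is how the paper obtains $453\zeta(2)\zeta(3,2)=1690\zeta(3,2,2)+3153\zeta(2,3,2)+120\zeta(2,2,3)$, and the analogous identity for $151\zeta(2)\zeta(2,3)$.

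The paper then proceeds as follows, for whichever $x$ was found first:
\begin{itemize}
\item use a relation not containing $x$, eliminating $x$ between these two identities when $x$ is a triple value;
\item check by positivity that its sides do not vanish (e.g.\ $785\zeta(2,3,2)-750\zeta(2,2,3)>0$);
\item conclude from the transcendence of $\zeta(2)$ that some value other than $x$ is transcendental.
\end{itemize}

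\textbf{Your idea for the first half is sound once made global.} Suppose all of $S$ lay in $\overline{\bbQ}+\overline{\bbQ}\zeta(3)$. Your two identities would give $\zeta(3)(\zeta(2)-b)=a$ and $\zeta(3)(\zeta(4)-b')=a'$ with $a,a'\in\overline{\bbQ}^\times$. Hence $a'(\zeta(2)-b)=a(\zeta(4)-b')$, a nontrivial algebraic equation for $\pi$.

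This is a valid alternative to the paper's argument. The paper instead uses the three $\zeta(2)$-relations and shows that a nonzero constant term would put $1/\zeta(3)$ in $\overline{\bbQ}+\overline{\bbQ}\zeta(3)$. Moreover, any $x\notin\overline{\bbQ}+\overline{\bbQ}\zeta(3)$ is automatically transcendental and, by Ap\'ery, independent of $1,\zeta(3)$. So the global version makes both the appeal to Zlobin's theorem and your ``algebraic $\zeta(\bk)$'' case unnecessary.
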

We now recall a result from Zagier's paper \cite{Zagier}.
For a nonnegative integer $n$ and an integer $i$ with $0\leq i\leq n$, set $z_{n,i}\coloneqq \zeta(\{2\}^i,3,\{2\}^{n-i})$ and $h_n\coloneqq \zeta(\{2\}^n)=\pi^{2n}/(2n+1)!$, where $\zeta(\{2\}^0)=h_0=1$.
Define the column vectors $Z_n$ and $R_n$, each of length $n+1$, by
\[
Z_n\coloneqq
\begin{pmatrix}
z_{n,0}\\
z_{n,1}\\
\vdots\\
z_{n,n}
\end{pmatrix},
\qquad
R_n\coloneqq
\begin{pmatrix}
h_n\zeta(3)\\
h_{n-1}\zeta(5)\\
\vdots\\
h_1\zeta(2n+1)\\
\zeta(2n+3)
\end{pmatrix}
\]
and define the matrix $A_n\coloneqq(a_{ij})_{0\leq i\leq n, 1\leq j\leq n+1}\in M_{n+1}(\bbQ)$ by
\[
a_{ij}\coloneqq 2(-1)^j\left[\binom{2j}{2i+2}-\left(1-\frac{1}{2^{2j}}\right)\binom{2j}{2(n-i)+1}\right].
\]
Zagier's theorem \cite[Theorem~1]{Zagier} asserts that
\[
Z_n=A_nR_n
\]
and that $A_n$ is invertible, or equivalently, that $A_n\in GL_{n+1}(\bbQ)$.
Set $d_n\coloneqq\frac{(n+1)(2n+3)}{3}$.
Then $\zeta(2)h_n=d_nh_{n+1}$.
Therefore, if we define a matrix $D_n\in M_{n+1,n+2}(\bbQ)$ by
\[
D_n\coloneqq
\begin{pmatrix}
d_n    & 0       & \cdots & \cdots & 0   & 0\\
0      & d_{n-1} & \ddots &        & \vdots & \vdots\\
\vdots & \ddots  & \ddots & \ddots & \vdots & \vdots\\
\vdots &         & \ddots & d_1    & 0   & \vdots\\
0      & \cdots  & \cdots & 0      & 1   & 0
\end{pmatrix},
\]
then $\zeta(2)R_n=D_nR_{n+1}$.
It follows from Zagier's theorem that
\[
\zeta(2)Z_n=\zeta(2)A_nR_n=A_nD_nR_{n+1}=A_nD_nA_{n+1}^{-1}Z_{n+1}.
\]
Hence, on setting $B_n\coloneqq A_nD_nA_{n+1}^{-1}\in M_{n+1,n+2}(\bbQ)$, we obtain
\begin{equation}\label{eq:key}
\zeta(2)Z_n=B_nZ_{n+1}.
\end{equation}
\begin{proof}[Proof of \cref{thm:Zlobin2}]
Since 
\[
A_0=(1),\qquad A_1=\frac{1}{2}\begin{pmatrix} -4 & 9 \\ 6 & -11\end{pmatrix}, \qquad A_2=\frac{1}{16}
\begin{pmatrix}
-32 & 192 & -291\\
0 & -88 & 150\\
48 & -120 & 157
\end{pmatrix},
\]
we have
\[
B_0=\frac{1}{5}\begin{pmatrix} 11 & 9\end{pmatrix},\qquad B_1=\frac{1}{453}
\begin{pmatrix}
1690 & 3153 & 120 \\
840 & -537 & 2070
\end{pmatrix}.
\]
Hence, by \eqref{eq:key}, we have
\begin{align}
5\zeta(2)\zeta(3)&=11\zeta(3,2)+9\zeta(2,3),\label{eq:2-3}\\
453\zeta(2)\zeta(3,2)&=1690\zeta(3,2,2)+3153\zeta(2,3,2)+120\zeta(2,2,3),\label{eq:2-32}\\
151\zeta(2)\zeta(2,3)&=280\zeta(3,2,2)-179\zeta(2,3,2)+690\zeta(2,2,3).\label{eq:2-23}
\end{align}
Let $S\coloneqq\{\zeta(3,2),\zeta(2,3),\zeta(3,2,2),\zeta(2,3,2),\zeta(2,2,3)\}$.
We first prove that at least one element of $S$ does not belong to $\overline{\bbQ}+\overline{\bbQ}\zeta(3)$.
Suppose, for contradiction, that $S\subset\overline{\bbQ}+\overline{\bbQ}\zeta(3)$.
Then, by \eqref{eq:2-3}, \eqref{eq:2-32}, and \eqref{eq:2-23}, we have $\zeta(2)\zeta(3),\zeta(2)\zeta(3,2),\zeta(2)\zeta(2,3)\in\overline{\bbQ}+\overline{\bbQ}\zeta(3)$.
Suppose first that $\zeta(3)\in\overline{\bbQ}$.
Since $\zeta(2)\zeta(3)\in\overline{\bbQ}$, this would imply that $\zeta(2)\in\overline{\bbQ}$, contradicting the transcendence of $\zeta(2)$.
We may therefore assume that $\zeta(3)\notin\overline{\bbQ}$.
Write $\zeta(2)\zeta(3)=a+b\zeta(3)$, $\zeta(3,2)=c+d\zeta(3)$, and $\zeta(2,3)=e+f\zeta(3)$, where $a,b,c,d,e,f\in\overline{\bbQ}$.
The transcendence of $\zeta(2)$ implies that $a\neq 0$.
We have
\[
\zeta(2)\zeta(3,2)=\left(\frac{a}{\zeta(3)}+b\right)(c+d\zeta(3))=\frac{ac}{\zeta(3)}+ad+bc+bd\zeta(3)\in\overline{\bbQ}+\overline{\bbQ}\zeta(3).
\]
If $c\neq 0$, it follows that $\frac{1}{\zeta(3)}\in\overline{\bbQ}+\overline{\bbQ}\zeta(3)$.
Hence $\zeta(3)$ is a root of a polynomial of degree at most $2$ with coefficients in $\overline{\bbQ}$.
This contradicts the assumption that $\zeta(3)\notin\overline{\bbQ}$. Therefore, $c=0$.
The same argument shows that $e=0$.
Consequently, $\zeta(3,2)=d\zeta(3)$ and $\zeta(2,3)=f\zeta(3)$, and hence \eqref{eq:2-3} gives $\zeta(2)=\frac{11}{5}d+\frac{9}{5}f\in\overline{\bbQ}$. This is a contradiction.

We may therefore choose $x\in S$ such that $x\notin\overline{\bbQ}+\overline{\bbQ}\zeta(3)$.
Since $\zeta(3)$ is irrational, the condition $x\notin\overline{\bbQ}+\overline{\bbQ}\zeta(3)$ implies that $1$, $\zeta(3)$, and $x$ are linearly independent over $\bbQ$.
If $x=\zeta(3,2)$, then \eqref{eq:2-23}, together with the transcendence of $\zeta(2)$, shows that at least one of $\zeta(2,3)$, $\zeta(3,2,2)$, $\zeta(2,3,2)$, and $\zeta(2,2,3)$ is transcendental.
Similarly, if $x=\zeta(2,3)$, we use \eqref{eq:2-32}.
If $x=\zeta(3,2,2)$, we use the relation
\[
84\zeta(2)\zeta(3,2)-169\zeta(2)\zeta(2,3)=785\zeta(2,3,2)-750\zeta(2,2,3)>0,
\]
which is obtained by eliminating $\zeta(3,2,2)$ from
\eqref{eq:2-32} and \eqref{eq:2-23}. 
If $x=\zeta(2,3,2)$, we use the relation
\[
537\zeta(2)\zeta(3,2)+3153\zeta(2)\zeta(2,3)=7850\zeta(3,2,2)+14550\zeta(2,2,3),
\]
which is obtained by eliminating $\zeta(2,3,2)$.
Finally, if $x=\zeta(2,2,3)$, we use the relation
\[
69\zeta(2)\zeta(3,2)-4\zeta(2)\zeta(2,3)=250\zeta(3,2,2)+485\zeta(2,3,2)>0,
\]
which is obtained by eliminating $\zeta(2,2,3)$.
In each case, these relations imply the existence of another transcendental number.
\end{proof}
\begin{lemma}\label{lem:general}
Let $n$ be a nonnegative integer, and set
\[
C_n\coloneqq [I_{n+1}\mid -B_n]=(v_1,\dots,v_{2n+3})\in M_{n+1,2n+3}(\bbQ).
\]
Suppose that, for every choice of integers $1\leq i_1<\cdots<i_n\leq 2n+3$, we have
\[
l_{i_1,\dots,i_n}\coloneqq\sum_{k=0}^n\det(v_{i_1},\dots,v_{i_n},v_{k+1})z_{n,k}\neq 0.
\]
When $n=0$, the tuple $(i_1,\dots,i_n)$ is understood to be empty.
Then at least $n+1$ of the following $2n+3$ numbers are transcendental:
\[
z_{n,0},\dots,z_{n,n}, \quad z_{n+1,0},\dots,z_{n+1,n+1}.
\]
\end{lemma}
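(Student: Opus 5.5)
Write $w=(w_1,\dots,w_{2n+3})^{T}\coloneqq(\zeta(2)z_{n,0},\dots,\zeta(2)z_{n,n},z_{n+1,0},\dots,z_{n+1,n+1})^{T}$. Then \eqref{eq:key} reads $C_nw=0$. The proof argues by contradiction. Suppose that at most $n$ of the $2n+3$ numbers $z_{n,0},\dots,z_{n,n},z_{n+1,0},\dots,z_{n+1,n+1}$ are transcendental. Then at least $n+3$ of them are algebraic.

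\emph{Reduction to exactly $n$ transcendental indices.} Choose indices $1\le i_1<\cdots<i_n\le 2n+3$ so that every transcendental number in the list has its position among them. Positions refer to the list $z_{n,0},\dots,z_{n+1,n+1}$, which is aligned with $w$. If fewer than $n$ numbers are transcendental, pad with arbitrary extra indices. Let $J$ be the complementary set of $n+3$ indices. For $j\in J$, the number $z$ in position $j$ is algebraic. Hence $w_j$ is algebraic if $j\ge n+2$, and $w_j\in\overline{\bbQ}\,\zeta(2)$ if $j\le n+1$.

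\emph{Eliminating the unknowns.} The relation $C_nw=0$ gives $\sum_{m}w_m v_m=0$ in $\bbQ^{n+1}$. We eliminate $w_{i_1},\dots,w_{i_n}$ by pairing with the linear functional $u\mapsto \det(v_{i_1},\dots,v_{i_n},u)$. This is Cramer's rule, or equivalently the cofactor vector of the $n$ columns. The functional kills $v_{i_1},\dots,v_{i_n}$, so we obtain
\[
\sum_{j\in J}\det(v_{i_1},\dots,v_{i_n},v_j)\,w_j=0 .
\]
Split this sum according to $j\le n+1$ and $j\ge n+2$. The first part is $\zeta(2)$ times an algebraic number $\alpha$, and the second part is an algebraic number $\beta$. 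So $\alpha\zeta(2)+\beta=0$. Since $\zeta(2)=\pi^2/6$ is transcendental, $\alpha=\beta=0$.

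\emph{Contradiction with the hypothesis.} We would like to conclude that $\alpha=l_{i_1,\dots,i_n}$. Note that $v_{k+1}=e_{k+1}$ for $k\le n$. Also $\det(v_{i_1},\dots,v_{i_n},v_{k+1})=0$ whenever $k+1\in\{i_1,\dots,i_n\}$, because a column is then repeated. Therefore
\[
l_{i_1,\dots,i_n}=\sum_{k:\,k+1\in J}\det(v_{i_1},\dots,v_{i_n},v_{k+1})z_{n,k},
\]
and this is exactly $\alpha$, since $w_{k+1}=\zeta(2)z_{n,k}$. Thus $l_{i_1,\dots,i_n}=\alpha=0$, contradicting the hypothesis. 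Hence at least $n+1$ of the numbers are transcendental.

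The main obstacle is the bookkeeping. We must index so that the functional is applied consistently, and we must keep the factor $\zeta(2)$ in the first block, since it is $\zeta(2)z_{n,k}$ and not $z_{n,k}$ that appears in $C_nw=0$. We must also note that all the terms dropped from $l$ vanish automatically as repeated-column determinants. When $n=0$, the determinant of the empty tuple with $v_1$ is just the $1\times1$ entry, and the argument reduces directly to $\alpha\zeta(2)+\beta=0$ with $\alpha=z_{0,0}=\zeta(3)$.
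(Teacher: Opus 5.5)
Your proof is correct and follows the paper's argument: you pair $C_nw=0$ with the cofactor functional $u\mapsto\det(v_{i_1},\dots,v_{i_n},u)$, note that the surviving coefficient of $\zeta(2)$ is exactly $l_{i_1,\dots,i_n}$, and then invoke the transcendence of $\zeta(2)$. The only cosmetic difference is that the paper divides by $l_{i_1,\dots,i_n}\neq 0$ to write $\zeta(2)$ as an algebraic quotient, whereas you derive $l_{i_1,\dots,i_n}=\alpha=0$ from $\alpha\zeta(2)+\beta=0$.
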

\begin{proof}
Denote these $2n+3$ numbers, in the order listed, by $z_1,\dots,z_{2n+3}$. Thus,
\[
z_{k+1}=z_{n,k}\quad (0\leq k\leq n),\qquad z_{n+k+2}=z_{n+1,k}\quad (0\leq k\leq n+1).
\]
Suppose, for contradiction, that at most $n$ of these numbers are transcendental, and choose integers $1\leq i_1<\cdots<i_n\leq 2n+3$ such that all the transcendental numbers among $z_1,\dots,z_{2n+3}$ are contained in $\{z_{i_1},\dots,z_{i_n}\}$.

There exist scalars $c_1,\dots,c_{n+1}\in\bbQ$ such that
\[
\det(v_{i_1},\dots,v_{i_n},v)=(c_1,\dots, c_{n+1})v
\]
for every column vector $v$ of length $n+1$.
This follows by expanding the determinant along its last column.
It follows from \eqref{eq:key} that
\[
C_n
\begin{pmatrix}
\zeta(2)z_1\\
\vdots\\
\zeta(2)z_{n+1}\\
z_{n+2}\\
\vdots\\
z_{2n+3}
\end{pmatrix}=\sum_{k=1}^{n+1}v_k\zeta(2)z_k+\sum_{k=n+2}^{2n+3}v_kz_k=\boldsymbol{0}.
\]
Multiplying this identity on the left by $(c_1,\dots,c_{n+1})$, we obtain
\[
\sum_{k=1}^{n+1}\det(v_{i_1},\dots, v_{i_n},v_k)\zeta(2)z_k+\sum_{k=n+2}^{2n+3}\det(v_{i_1},\dots, v_{i_n},v_k)z_k=0.
\]
The terms corresponding to $k\in\{i_1,\dots,i_n\}$ vanish.
Therefore, since $z_j$ is algebraic for every
\[
j\in\{1,\dots,2n+3\}\setminus\{i_1,\dots,i_n\},
\]
and since the denominator below is nonzero by assumption, we obtain
\[
\zeta(2)
=
\frac{
-\displaystyle\sum_{k=0}^{n+1}
\det(v_{i_1},\dots,v_{i_n},v_{n+k+2})z_{n+1,k}
}{
l_{i_1,\dots,i_n}
}
\in\overline{\bbQ}.
\]
This contradicts the transcendence of $\zeta(2)$.
\end{proof}
For each $1\leq i_1<\cdots<i_n\leq 2n+3$, let $\widetilde l_{i_1,\dots,i_n}$ be the nonzero rational multiple of $l_{i_1,\dots,i_n}$ whose coefficients are relatively prime integers and whose first nonzero coefficient is positive.
\begin{corollary}
At least three of the following seven multiple zeta values are transcendental:
\[
\begin{gathered}
\zeta(3,2,2),\quad
\zeta(2,3,2),\quad
\zeta(2,2,3),\\
\zeta(3,2,2,2),\quad
\zeta(2,3,2,2),\quad
\zeta(2,2,3,2),\quad
\zeta(2,2,2,3).
\end{gathered}
\]
\end{corollary}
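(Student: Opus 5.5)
The plan is to apply \cref{lem:general} with $n=2$. The seven listed values are exactly $z_{2,0},z_{2,1},z_{2,2}$ and $z_{3,0},\dots,z_{3,3}$, and the lemma then gives at least $n+1=3$ transcendental values among them. Everything therefore comes down to checking the hypothesis of the lemma for $n=2$. That is, for every pair $1\le i_1<i_2\le 7$, the linear form
\[
l_{i_1,i_2}=\sum_{k=0}^{2}\det(v_{i_1},v_{i_2},v_{k+1})\,z_{2,k}
\]
must not vanish. Here $v_1,\dots,v_7$ are the columns of $C_2=[I_3\mid -B_2]$.

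First I compute $B_2=A_2D_2A_3^{-1}\in M_{3,4}(\bbQ)$. The matrix $A_2$ is given in the proof of \cref{thm:Zlobin2}. I obtain $A_3$ from the formula for $a_{ij}$ with $n=3$, and $D_2$ has diagonal entries $d_2=7$, $d_1=\tfrac{10}{3}$, $1$ with a final zero column. This is a routine exact rational computation, though tedious. It yields the columns $v_4,\dots,v_7$, while $v_1,v_2,v_3$ are the standard basis vectors $e_1,e_2,e_3$.

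For each of the $\binom72=21$ pairs $(i_1,i_2)$, the coefficient vector $(\det(v_{i_1},v_{i_2},e_j))_{j=1,2,3}$ is the cross product $v_{i_1}\times v_{i_2}$. So $l_{i_1,i_2}=(v_{i_1}\times v_{i_2})\cdot Z_2$, and I record the normalized forms $\widetilde l_{i_1,i_2}$. When $\{i_1,i_2\}\subset\{1,2,3\}$, the form is $\pm z_{2,k}$ for the remaining index, which is a positive real number, so it is nonzero.

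The main obstacle is showing that the remaining forms, with integer coefficients in $z_{2,0},z_{2,1},z_{2,2}$, are nonzero. We cannot simply invoke linear independence of these triple zeta values, since that is unknown. I would try the following in order.
\begin{itemize}
\item If all nonzero coefficients of $\widetilde l$ have the same sign, positivity of the $z_{2,k}$ settles it.
\item Otherwise I would use the Zagier expression $Z_2=A_2R_2$ to rewrite $l_{i_1,i_2}$ as a rational combination of $h_2\zeta(3)$, $h_1\zeta(5)$ and $\zeta(7)$. If the combination has mixed signs, I would decide its sign numerically, using rigorous enclosures of $\zeta(3),\zeta(5),\zeta(7),\pi$ with error bounds. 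This is how the earlier proof handled relations such as $785\zeta(2,3,2)-750\zeta(2,2,3)>0$.
\item Alternatively, one can compare via known inequalities between the $z_{2,k}$ coming from their series definitions.
\end{itemize}
Since $\widetilde l$ has integer coefficients, a sufficiently accurate numerical evaluation rigorously certifies $l\neq0$, provided the value is not extremely close to zero. I expect this to work in all 21 cases.

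Once every $l_{i_1,i_2}\neq 0$ is verified, \cref{lem:general} applies directly and gives the corollary.
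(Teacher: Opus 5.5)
Your proposal is the paper's proof: apply \cref{lem:general} with $n=2$, compute $A_3$ and $B_2=A_2D_2A_3^{-1}$, and check that all $21$ forms $\widetilde l_{i,j}$ are nonzero using positive coefficients, the ordering $z_{2,0}>z_{2,1}>z_{2,2}$, or numerical evaluation. The part you leave undone is the explicit computation, which the paper carries out; it confirms that every $\widetilde l_{i,j}$ is nonzero, and only the five forms $\widetilde l_{2,4},\widetilde l_{3,6},\widetilde l_{4,5},\widetilde l_{4,6},\widetilde l_{6,7}$ need a numerical value rather than a sign or ordering argument.
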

\begin{proof}
A direct calculation gives
\[
A_3=
\frac{1}{16}
\begin{pmatrix}
-32 & 192 & -480 & 641\\
0 & 32 & -291 & 455\\
0 & -120 & 598 & -889\\
48 & -120 & 189 & -223
\end{pmatrix}
\]
and
\[
B_2=
\frac{1}{2085}
\begin{pmatrix}
16269 & 9561 & 16344 & 1116\\
-840 & 19490 & 9510 & -560\\
6048 & -6888 & -3837 & 18627
\end{pmatrix}.
\]
For the $21$ choices of integers $i$ and $j$ satisfying $1\leq i<j\leq 7$, all the quantities $\widetilde l_{i,j}$ can be verified by direct computation to be nonzero, as shown below.
The conclusion therefore follows from \cref{lem:general}.
Note that, even when negative coefficients occur, the sign of $\widetilde l_{i,j}$ can sometimes be determined easily by using the inequalities $z_{2,0}>z_{2,1}>z_{2,2}$, which follow immediately from the definition.
\begin{align*}
\widetilde l_{1,2}&=z_{2,2}>0\\
\widetilde l_{1,3}&=z_{2,1}>0\\
\widetilde l_{1,4}&=36z_{2,1} + 5z_{2,2}>0\\
\widetilde l_{1,5}&=3444z_{2,1} + 9745z_{2,2}>0\\
\widetilde l_{1,6}&=1279z_{2,1} + 3170z_{2,2}>0\\
\widetilde l_{1,7}&=2661z_{2,1} + 80z_{2,2}>0\\
\widetilde l_{2,3}&=z_{2,0}>0\\
\widetilde l_{2,4}&=2016z_{2,0} - 5423z_{2,2}=199.4075\ldots>0\\
\widetilde l_{2,5}&=2296z_{2,0} + 3187z_{2,2}>0\\
\widetilde l_{2,6}&=1279z_{2,0} + 5448z_{2,2}>0\\
\widetilde l_{2,7}&=6209z_{2,0} - 372z_{2,2}>0\\
\widetilde l_{3,4}&=280z_{2,0} + 5423z_{2,1}>0\\
\widetilde l_{3,5}&=19490z_{2,0} - 9561z_{2,1}>0\\
\widetilde l_{3,6}&=1585z_{2,0} - 2724z_{2,1}=84.7017\ldots>0\\
\widetilde l_{3,7}&=140z_{2,0} + 279z_{2,1}>0\\
\widetilde l_{4,5}&=5376z_{2,0} - 8148z_{2,1} - 15593z_{2,2}=-88.2430\ldots<0\\
\widetilde l_{4,6}&=8680z_{2,0} - 25783z_{2,1} - 26930z_{2,2}=-1103.3161\ldots<0\\
\widetilde l_{4,7}&=840z_{2,0} + 20301z_{2,1} + 560z_{2,2}>0\\
\widetilde l_{5,6}&=4450z_{2,0} + 36399z_{2,1} + 109170z_{2,2}>0\\
\widetilde l_{5,7}&=172270z_{2,0} - 89103z_{2,1} - 13000z_{2,2}>0\\
\widetilde l_{6,7}&=41965z_{2,0} - 74034z_{2,1} - 4740z_{2,2}=1966.7522\ldots>0.\qedhere
\end{align*}
\end{proof}
\begin{corollary}
At least four of the following nine multiple zeta values are transcendental:
\[
\begin{gathered}
\zeta(3,2,2,2),\quad
\zeta(2,3,2,2),\quad
\zeta(2,2,3,2),\quad
\zeta(2,2,2,3),\\
\zeta(3,2,2,2,2),\quad
\zeta(2,3,2,2,2),\quad
\zeta(2,2,3,2,2),\quad
\zeta(2,2,2,3,2),\quad
\zeta(2,2,2,2,3).
\end{gathered}
\]
\end{corollary}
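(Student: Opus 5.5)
This is \cref{lem:general} with $n=3$: the nine values are $z_{3,0},\dots,z_{3,3}$ and $z_{4,0},\dots,z_{4,4}$, and $n+1=4$. So it suffices to check that for each of the $\binom{9}{3}=84$ triples $1\leq i_1<i_2<i_3\leq 9$,
\[
l_{i_1,i_2,i_3}=\sum_{k=0}^{3}\det(v_{i_1},v_{i_2},v_{i_3},v_{k+1})\,z_{3,k}\neq 0,
\]
where $C_3=[I_4\mid -B_3]=(v_1,\dots,v_9)$.

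First we compute $A_4\in GL_5(\bbQ)$ from Zagier's formula for $a_{ij}$ (with $n=4$). Then we form $D_3=\mathrm{diag}(d_3,d_2,d_1,1)$ padded by a zero column, where $d_3=15$, $d_2=7$, $d_1=10/3$, and set $B_3=A_3D_3A_4^{-1}$, a $4\times 5$ rational matrix, using the $A_3$ already displayed. A sanity check is available: the row sums of $B_3$ must reproduce the relation obtained from $\zeta(2)\zeta(\{2\}^3)$-type identities. More concretely, applying \eqref{eq:key} to the sum formula $\sum_i z_{n,i}$ (which is a known rational combination of $R_n$) gives a consistency test.

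Next, for each triple we compute the row vector $(c_1,\dots,c_4)$ of $3\times 3$ cofactors of $(v_{i_1},v_{i_2},v_{i_3})$. The coefficients of $l$ are then $c_{k+1}$ applied to the standard basis, so $l_{i_1,i_2,i_3}=\sum_k c_{k+1}z_{3,k}$. Because $v_1,\dots,v_4$ are standard basis vectors, triples inside $\{1,2,3,4\}$ give $l=\pm z_{3,k}$ for the missing index $k$, which is trivially nonzero. Mixed triples give linear forms with few terms. We normalize each to $\widetilde l$ with coprime integer coefficients and first coefficient positive.

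Finally we establish that each $\widetilde l\neq 0$. When all coefficients are nonnegative, positivity of the $z_{3,k}$ suffices. When signs are mixed, we first try the monotonicity $z_{3,0}>z_{3,1}>z_{3,2}>z_{3,3}>0$, which is clear termwise since moving the $3$ left puts the larger exponent on a smaller $n_i$. Abel summation turns a form $\sum c_k z_{3,k}$ into a combination of $z_{3,0}$, $z_{3,0}-z_{3,1}$, and so on, which is positive whenever all partial sums $c_1+\dots+c_j$ are positive. For any remaining forms we evaluate the $z_{3,k}$ numerically to enough digits, either via Zagier's theorem from $\zeta(3),\zeta(5),\zeta(7),\zeta(9)$ and powers of $\pi$, or directly, and certify that $|\widetilde l|$ exceeds the error.

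The main obstacle is this last step. Eighty-four exact rational determinant computations are mechanical, but some forms may have near-cancelling mixed signs, as $\widetilde l_{4,5}$ did in the previous corollary. These require rigorous numerical bounds, and if some $l$ actually vanished the lemma would not apply as stated.
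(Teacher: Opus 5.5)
Your reduction is the same as the paper's: \cref{lem:general} with $n=3$, the matrices $A_4$ and $B_3=A_3D_3A_4^{-1}$, and a case-by-case check that the $84$ forms $\widetilde l_{i,j,k}$ are nonzero, using positivity, the ordering $z_{3,0}>z_{3,1}>z_{3,2}>z_{3,3}$, or numerics. As written, however, it is a plan rather than a proof, and the plan contains a slip that would invalidate the computation.

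You take $d_3=15$, but $d_3=(3+1)(2\cdot 3+3)/3=12$; equivalently $\zeta(2)h_3/h_4=9!/(6\cdot 7!)=12$. With $15$, the first column of $A_3D_3$ is wrong, so your $B_3$ would not satisfy \eqref{eq:key}. The discrepancy $B_3Z_4-\zeta(2)Z_3$ would be $3h_4\zeta(3)\,(-2,0,0,3)^T\neq 0$. The $84$ forms you derive would then come from a false relation, and their nonvanishing would prove nothing. For comparison, the paper's $B_3$ sends the first column $(-2,0,0,0,3)^T$ of $A_4$ to $12\cdot(-2,0,0,3)^T$, as it must. Your proposed sum-formula sanity check does not protect against this if both sides pass through the same $D_3$. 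A direct numerical check of $\zeta(2)Z_3=B_3Z_4$ would catch it.

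The second issue is that the hypothesis of \cref{lem:general} is never verified. Nonvanishing of all $84$ forms is the whole content of this corollary, and you leave it open, even noting that the lemma would not apply if some $l$ vanished. The paper's proof consists of exactly that data: it lists every $\widetilde l_{i,j,k}$ together with its sign.

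Your partial-sum criterion, applied to $\widetilde l$ or to $-\widetilde l$, does not settle many of these forms. Examples are $\widetilde l_{3,4,5}=13860z_{3,0}-388763z_{3,1}$ and $\widetilde l_{1,5,6}=3720z_{3,1}-4269z_{3,2}-1400z_{3,3}=12.97\ldots$. The paper records numerical values for $37$ of the $84$ forms. Some of these are small relative to coefficients of size $10^5$, for instance $\widetilde l_{2,5,6}=31.60\ldots$, so they need genuinely controlled numerics. Until those computations are carried out with the correct $B_3$, the corollary is not proved.
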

\begin{proof}
A direct calculation gives
\[
A_4=
\frac{1}{256}
\begin{pmatrix}
-512 & 3072 & -7680 & 14336 & -17925\\
0 & 512 & -7680 & 31760 & -46140\\
0 & 0 & 2512 & -14224 & 21378\\
0 & -1920 & 10080 & -28048 & 38340\\
768 & -1920 & 3024 & -4080 & 4603
\end{pmatrix}
\]
and
\[
B_3=
\frac{1}{7665513}
\begin{pmatrix}
97968276 & 42880725 & 33333240 & 78342276 & 3988080\\
3492720 & 56607621 & 117346350 & 4046280 & 2328480\\
-7056000 & 59083080 & 9936530 & 62828479 & -4704000\\
30796416 & -29746656 & -31191480 & -17516685 & 112517100
\end{pmatrix}.
\]
For the $84$ choices of integers $i$, $j$, and $k$ satisfying $1\leq i<j<k\leq 9$, all the quantities $\widetilde l_{i,j,k}$ can be verified by direct computation to be nonzero, as shown below.
The conclusion therefore follows from \cref{lem:general}.
Note that $z_{3,0}>z_{3,1}>z_{3,2}>z_{3,3}$.
\begin{align*}
\widetilde l_{1,2,3}&=z_{3,3}>0\\
\widetilde l_{1,2,4}&=z_{3,2}>0\\
\widetilde l_{1,2,5}&=3819z_{3,2} + 875z_{3,3}>0\\
\widetilde l_{1,2,6}&=1239444z_{3,2} + 2461795z_{3,3}>0\\
\widetilde l_{1,2,7}&=3119148z_{3,2} + 993653z_{3,3}>0\\
\widetilde l_{1,2,8}&=761595z_{3,2} + 2731673z_{3,3}>0\\
\widetilde l_{1,2,9}&=375057z_{3,2} + 15680z_{3,3}>0\\
\widetilde l_{1,3,4}&=z_{3,1}>0\\
\widetilde l_{1,3,5}&=10184z_{3,1} - 1155z_{3,3}>0\\
\widetilde l_{1,3,6}&=9915552z_{3,1} + 18869207z_{3,3}>0\\
\widetilde l_{1,3,7}&=1039716z_{3,1} + 3911545z_{3,3}>0\\
\widetilde l_{1,3,8}&=1167779z_{3,1} + 269752z_{3,3}>0\\
\widetilde l_{1,3,9}&=208365z_{3,1} - 4312z_{3,3}>0\\
\widetilde l_{1,4,5}&=200z_{3,1} + 99z_{3,2}>0\\
\widetilde l_{1,4,6}&=2813480z_{3,1} - 2695601z_{3,2}>0\\
\widetilde l_{1,4,7}&=993653z_{3,1} - 11734635z_{3,2}=-56416.2857\ldots<0\\
\widetilde l_{1,4,8}&=8975497z_{3,1} - 578040z_{3,2}>0\\
\widetilde l_{1,4,9}&=200z_{3,1} + 99z_{3,2}>0\\
\widetilde l_{1,5,6}&=3720z_{3,1} - 4269z_{3,2} - 1400z_{3,3}=12.9719\ldots>0\\
\widetilde l_{1,5,7}&=1112z_{3,1} - 48180z_{3,2} - 11165z_{3,3}=-292.5035\ldots<0\\
\widetilde l_{1,5,8}&=33488z_{3,1} - 3435z_{3,2} - 4585z_{3,3}>0\\
\widetilde l_{1,5,9}&=200z_{3,1} + 99z_{3,2}>0\\
\end{align*}

\begin{align*}
\widetilde l_{1,6,7}&=20185344z_{3,1} + 22503204z_{3,2} + 83108599z_{3,3}>0\\
\widetilde l_{1,6,8}&=15542664z_{3,1} + 16236255z_{3,2} + 61826149z_{3,3}>0\\
\widetilde l_{1,6,9}&=2021400z_{3,1} - 1999863z_{3,2} - 125440z_{3,3}=10487.0516\ldots>0\\
\widetilde l_{1,7,8}&=23294699z_{3,1} + 25168695z_{3,2} + 95655525z_{3,3}>0\\
\widetilde l_{1,7,9}&=211185z_{3,1} - 2886543z_{3,2} - 125048z_{3,3}=-14559.2965\ldots<0\\
\widetilde l_{1,8,9}&=2170165z_{3,1} - 154080z_{3,2} - 51352z_{3,3}>0\\
\widetilde l_{2,3,4}&=z_{3,0}>0\\
\widetilde l_{2,3,5}&=122208z_{3,0} - 388763z_{3,3}=2068.6985\ldots>0\\
\widetilde l_{2,3,6}&=1101728z_{3,0} + 1588175z_{3,3}>0\\
\widetilde l_{2,3,7}&=259929z_{3,0} + 277777z_{3,3}>0\\
\widetilde l_{2,3,8}&=5838895z_{3,0} + 26114092z_{3,3}>0\\
\widetilde l_{2,3,9}&=21555z_{3,0} - 764z_{3,3}>0\\
\widetilde l_{2,4,5}&=28000z_{3,0} + 388763z_{3,2}>0\\
\widetilde l_{2,4,6}&=3938872z_{3,0} - 2858715z_{3,2}>0\\
\widetilde l_{2,4,7}&=993653z_{3,0} - 3333324z_{3,2}=5653.2117\ldots>0\\
\widetilde l_{2,4,8}&=62828479z_{3,0} - 78342276z_{3,2}=1117718.4012\ldots>0\\
\widetilde l_{2,4,9}&=19600z_{3,0} + 16617z_{3,2}>0\\
\widetilde l_{2,5,6}&=104160z_{3,0} - 274032z_{3,2} - 394135z_{3,3}=31.6059\ldots>0\\
\widetilde l_{2,5,7}&=13344z_{3,0} - 633996z_{3,2} - 187709z_{3,3}=-3780.2765\ldots<0\\
\widetilde l_{2,5,8}&=2812992z_{3,0} - 6412053z_{3,2} - 10417687z_{3,3}=7100.2988\ldots>0\\
\widetilde l_{2,5,9}&=16800z_{3,0} + 282141z_{3,2} + 11200z_{3,3}>0\\
\widetilde l_{2,6,7}&=6728448z_{3,0} - 1504404z_{3,2} + 6711205z_{3,3}>0\\
\widetilde l_{2,6,8}&=5180888z_{3,0} - 9810747z_{3,2} - 12017785z_{3,3}=42629.3849\ldots>0\\
\widetilde l_{2,6,9}&=673800z_{3,0} - 511821z_{3,2} - 45280z_{3,3}>0\\
\widetilde l_{2,7,8}&=23294699z_{3,0} - 24260916z_{3,2} + 17165536z_{3,3}=485170.7781\ldots>0\\
\widetilde l_{2,7,9}&=70395z_{3,0} - 280836z_{3,2} - 14236z_{3,3}=108.0429\ldots>0\\
\widetilde l_{2,8,9}&=2170165z_{3,0} - 2759640z_{3,2} - 192292z_{3,3}=37814.5789\ldots>0\\
\widetilde l_{3,4,5}&=13860z_{3,0} - 388763z_{3,1}=-3940.1310\ldots<0\\
\widetilde l_{3,4,6}&=18869207z_{3,0} - 14293575z_{3,1}>0\\
\widetilde l_{3,4,7}&=3911545z_{3,0} - 1111108z_{3,1}>0\\
\widetilde l_{3,4,8}&=337190z_{3,0} - 6528523z_{3,1}=-63561.2757\ldots<0\\
\end{align*}

\begin{align*}
\widetilde l_{3,4,9}&=3234z_{3,0} - 5539z_{3,1}=19.6197\ldots>0\\
\widetilde l_{3,5,6}&=956256z_{3,0} - 2192256z_{3,1} - 2793371z_{3,3}=-7354.4376\ldots<0\\
\widetilde l_{3,5,7}&=17520z_{3,0} - 19212z_{3,1} - 53555z_{3,3}=90.2641\ldots>0\\
\widetilde l_{3,5,8}&=96180z_{3,0} - 2137351z_{3,1} - 63560z_{3,3}=-21323.9339\ldots<0\\
\widetilde l_{3,5,9}&=924z_{3,0} - 31349z_{3,1} + 616z_{3,3}=-321.0014\ldots<0\\
\widetilde l_{3,6,7}&=7501068z_{3,0} + 1504404z_{3,1} + 13675889z_{3,3}>0\\
\widetilde l_{3,6,8}&=5412085z_{3,0} + 9810747z_{3,1} + 26471452z_{3,3}>0\\
\widetilde l_{3,6,9}&=666621z_{3,0} - 511821z_{3,1} - 13036z_{3,3}>0\\
\widetilde l_{3,7,8}&=8389565z_{3,0} + 8086972z_{3,1} + 39389860z_{3,3}>0\\
\widetilde l_{3,7,9}&=962181z_{3,0} - 280836z_{3,1} - 28292z_{3,3}>0\\
\widetilde l_{3,8,9}&=6420z_{3,0} - 114985z_{3,1} + 2152z_{3,3}=-1102.0557\ldots<0\\
\widetilde l_{4,5,6}&=44800z_{3,0} - 450440z_{3,1} + 399053z_{3,2}=-1557.1322\ldots<0\\
\widetilde l_{4,5,7}&=133980z_{3,0} - 187709z_{3,1} + 1767315z_{3,2}=11419.7430\ldots>0\\
\widetilde l_{4,5,8}&=55020z_{3,0} - 1488241z_{3,1} + 27240z_{3,2}=-14878.0539\ldots<0\\
\widetilde l_{4,5,9}&=200z_{3,1} + 99z_{3,2}>0\\
\widetilde l_{4,6,7}&=83108599z_{3,0} - 20133615z_{3,1} - 41027667z_{3,2}>0\\
\widetilde l_{4,6,8}&=61826149z_{3,0} + 36053355z_{3,1} - 79414356z_{3,2}>0\\
\widetilde l_{4,6,9}&=31360z_{3,0} - 33960z_{3,1} + 9777z_{3,2}=464.1344\ldots>0\\
\widetilde l_{4,7,8}&=95655525z_{3,0} - 17165536z_{3,1} - 118169580z_{3,2}=1518816.6557\ldots>0\\
\widetilde l_{4,7,9}&=31262z_{3,0} - 10677z_{3,1} + 21219z_{3,2}>0\\
\widetilde l_{4,8,9}&=12838z_{3,0} - 48073z_{3,1} - 12912z_{3,2}=-283.8185\ldots<0\\
\widetilde l_{5,6,7}&=3837792z_{3,0} - 4083552z_{3,1} - 5410548z_{3,2} - 12985147z_{3,3}=-13075.2001\ldots<0\\
\widetilde l_{5,6,8}&=2863392z_{3,0} - 962952z_{3,1} - 6428163z_{3,2} - 10472497z_{3,3}=-2489.0461\ldots<0\\
\widetilde l_{5,6,9}&=5376z_{3,0} - 63576z_{3,1} + 58815z_{3,2} + 3584z_{3,3}=-220.0641\ldots<0\\
\widetilde l_{5,7,8}&=885108z_{3,0} - 926215z_{3,1} - 1922547z_{3,2} - 3151113z_{3,3}=-7111.7603\ldots<0\\
\widetilde l_{5,7,9}&=26796z_{3,0} - 39321z_{3,1} + 430551z_{3,2} + 17864z_{3,3}=2751.9542\ldots>0\\
\widetilde l_{5,8,9}&=11004z_{3,0} - 351229z_{3,1} + 10944z_{3,2} + 7336z_{3,3}=-3516.2618\ldots<0\\
\widetilde l_{6,7,8}&=740431z_{3,0} - 17613831z_{3,1} - 19801959z_{3,2} - 71782441z_{3,3}=-470393.8674\ldots<0\\
\widetilde l_{6,7,9}&=962079z_{3,0} - 245079z_{3,1} - 488331z_{3,2} - 49444z_{3,3}>0\\
\widetilde l_{6,8,9}&=2148247z_{3,0} + 1197753z_{3,1} - 2816808z_{3,2} - 218692z_{3,3}>0\\
\widetilde l_{7,8,9}&=3323087z_{3,0} - 664212z_{3,1} - 4178568z_{3,2} - 278732z_{3,3}=50891.7185\ldots>0.\qedhere
\end{align*}
\end{proof}
Taking $n=0$, we see that at least one of $\zeta(3), \zeta(3,2), \zeta(2,3)$ is transcendental.
Moreover, using $\zeta(2,3)=3\zeta(2)\zeta(3)-\frac{11}{2}\zeta(5)$, we also find that at least one of $\zeta(3), \zeta(5), \zeta(2,3)$ is transcendental.
Although an inspection of the proof makes it clear that the latter statement is far more trivial than Zudilin's celebrated result \cite{Zudilin} that at least one of $\zeta(5), \zeta(7), \zeta(9), \zeta(11)$ is irrational, the formal resemblance between the two statements is at least superficially interesting.

\end{document}